\def\newrefformat#1#2{%
  \@namedef{pr@#1}##1{#2}}
\def\fref#1{\@prettyref#1:}
\def\@prettyref#1:#2:{%
  \expandafter\ifx\csname pr@#1\endcsname\relax%
    \PackageWarning{prettyref}{Reference format #1\space undefined}%
    \ref{#1:#2}%
  \else%
    \csname pr@#1\endcsname{#1:#2}%
  \fi%
}
\newcommand{\mynewthm}[3][dummythm]{%
  \newtheorem{#2}[#1]{#3}%
  \newrefformat{#2}{#3~\ref{##1}}%
}
\theoremstyle{plain}
\theoremstyle{definition}
\theoremstyle{remark}
\newcommand{\myenumlabel}[1]{\textnormal{(\roman{#1})}}
\newcounter{cycprfcnt}
\newenvironment{cycprf}%
{\begin{list}{\PackageWarning{pezz}{Label required for cycprf}}%
  {%
    \setcounter{cycprfcnt}{1}
    \setlength{\itemindent}{0.5\leftmargin}%
    \setlength{\leftmargin}{0pt}%
    \newcommand{\cpcurr}{\myenumlabel{cycprfcnt}}%
    \newcommand{\cpnext}{\addtocounter{cycprfcnt}{1}\cpcurr}%
    \newcommand{\impnext}{\cpcurr{} $\Longrightarrow$ \cpnext.}%
  }%
}%
{\qedhere\end{list}}%
\def\indsym#1#2{%
  \setbox0=\hbox{$\m@th#1x$}%
  \kern\wd0%
  \hbox to 0pt{\hss$\m@th#1\mid$\hbox to 0pt{$\m@th#1^{#2}$}\hss}%
  \lower.9\ht0\hbox to 0pt{\hss$\m@th#1\smile$\hss}%
  \kern\wd0}
\newcommand{\ind}[1][]{\mathop{\mathpalette\indsym{#1}}}
\def\nindsym#1#2{%
  \setbox0=\hbox{$\m@th#1x$}%
  \kern\wd0%
  \hbox to 0pt{\hss$\m@th#1\not$\kern1.4\wd0\hss}
  \hbox to 0pt{\hss$\m@th#1\mid$\hbox to 0pt{$\m@th#1^{#2}$}\hss}%
  \lower.9\ht0\hbox to 0pt{\hss$\m@th#1\smile$\hss}%
  \kern\wd0}
\newcommand{\nind}[1][]{\mathop{\mathpalette\nindsym{#1}}}
\def\dotminussym#1#2{%
  \setbox0=\hbox{$\m@th#1-$}%
  \kern.5\wd0%
  \hbox to 0pt{\hss\hbox{$\m@th#1-$}\hss}%
  \raise.6\ht0\hbox to 0pt{\hss$\m@th#1.$\hss}%
  \kern.5\wd0}
\DeclareMathOperator{\tp}{tp}
\DeclareMathOperator{\tS}{S}
\DeclareMathOperator{\dcl}{dcl}
\DeclareMathOperator{\SU}{SU}
\renewcommand{\setminus}{\smallsetminus}
\def\models{\vDash}
\newcommand{\rest}{{\restriction}}
\newcommand{\Cb}{\mathrm{Cb}}
\newcommand{\fP}{\mathfrak{P}}
\newcommand{\setR}{\mathbb{R}}
\newcommand{\Dx}[2][\Xi]{D(#2,#1)}
\DeclareMathOperator{\mcl}{mcl}
\title{On supersimplicity and lovely pairs of cats}
\author{Itay Ben-Yaacov}
\address{Itay Ben-Yaacov\\
  University of Wisconsin -- Madison\\
  Department of Mathematics\\
  480 Lincoln Drive\\
  Madison, WI 53706\\
  USA}
\urladdr{http://www.math.wisc.edu/\textasciitilde pezz}
\date{\today}
\keywords{beautiful pairs, lovely pairs, supersimplicity,
  superstability, Hausdorff cats, metric structures}
\subjclass[2000]{03C45,03C90,03C95}
\begin{document}

\begin{abstract}
  We prove that the definition of supersimplicity in metric structures
  from \cite{pezz:Morley} is equivalent to an \textit{a priori}
  stronger variant.
  This stronger variant is then used to prove that if $T$ is a
  supersimple Hausdorff cat then so is its theory of lovely pairs.
\end{abstract}

\maketitle

\section*{Introduction}

A superstable first order theory is one which is stable in every large
enough cardinality, or equivalently, one which is stable (in some
cardinality), and in which the type of every finite tuple over
arbitrary sets does not divide over a finite subset.
In more modern terms we would say that a first order theory is
superstable if and only if it is stable and supersimple.

Stability and simplicity were extended to various non-first-order
settings by various people.
Stability in the setting of large homogeneous structures goes back a
long time (see \cite{sh:lazy}), and some aspects of simplicity theory
were also shown to hold in this setting in \cite{bl:hom}.
The setting of compact abstract theories, or cats, was introduced in
\cite{pezz:posmod} with the intention, among others, to provide a
better non-first-order setting for the development of simplicity
theory, which was done in \cite{pezz:catsim}, and under the additional
assumption of thickness (with better results) in
\cite{pezz:fnctcat}.

\emph{Hausdorff} cats are ones whose type spaces are Hausdorff.
Many classes of metric structures arising in analysis can be viewed as
Hausdorff cats (e.g., the class of probability measure algebras
\cite{pezz:schroed}, elementary classes of Banach space
structures in the sense of Henson's logic, etc.)
Conversely,
a Hausdorff cat in a countable language
admits a definable metric on its home sort
which is unique up to uniform equivalence of
metrics \cite{pezz:Morley}
(and even if the language is uncountable this result remains
essentially true).
Thus Hausdorff cats form a natural setting for the study of metric
structures.

There is little doubt about the definitions of stability and
simplicity in the case of (metric) Hausdorff cats: all the approaches
mentioned above, and others, agree and give essentially the same
theory as in first order logic.
Many natural examples are indeed stable.
Unfortunately, no metric structure can be superstable or supersimple
according to the classical definition, unless it is essentially
discrete:
Indeed, in most cases that $b_n \to a$ in the metric we have
$a \nind_{b_{<n}} b_{<\omega}$ for all $n$.

A very illustrative example is the following.
Let $T$ be a first order theory.
For every $M \models T$, we can view $M^\omega$ as a metric structure, with
$$d(a_{<\omega},b_{<\omega}) = \inf \{2^{-n}\colon a_{<n} = b_{<n}\}.$$
The class of metric structure $\{M^\omega\colon M \models T\}$ is the class of
complete models (in the sense of \cite{pezz:Morley}) of a compact
abstract theory naturally called $T^\omega$.
If $T$ is stable (simple), then so is $T^\omega$, but $T^\omega$ is never
supersimple: this can be seen using the argument in the preceding
paragraph, or directly from the fact that finite tuples in the sense
of $T^\omega$ are in fact infinite tuples in the sense of $T$ (these two
arguments eventually boil down to the same thing).

In an arbitrary metric cat
define that $a^\varepsilon \ind_C B$ if there is $a'$ such
that $d(a,a') \leq \varepsilon$ and $a' \ind_C B$ (later on we will slightly
modify this).
Then for every simple first order theory $T$ and the corresponding
$T^\omega$ we have:
$$a_{<\omega}^{(2^{-n})} \ind_C B \text{ (in the sense of $T^\omega$)}
\Longleftrightarrow a_{<n} \ind_C B \text{ (in the sense of $T$)}.$$
(Knowing $a_{<\omega} \in M^\omega$ up to
distance $2^{-n}$ is the same as knowing $a_{<n}$.)
It follows that $T$ is supersimple if and only if, in $T^\omega$,
for every $\bar a \in M^\omega$,
$\varepsilon > 0$, and set $B$, there is $B_0 \subseteq B$ finite
such that $\bar a^\varepsilon \ind_{B_0} B$.

Generalising from this example, we suggested in \cite{pezz:Morley}
that:\\
-- Finite tuples in metric structures behave in some sense like
infinite tuples in classical first
order structures, and the right way to extract a ``truly finite'' part
of them is to consider them only up to some positive distance.\\
-- As a consequence, the above characterisation of the supersimplicity
of $T$ by properties of $T^\omega$ should be taken as the
\emph{definition} of the supersimplicity of a metric theory (so $T^\omega$
would be supersimple if and only if $T$ is).

One can now define that a Hausdorff cat is \emph{superstable} if it is
stable and supersimple.
An alternative approach to superstability was suggested by Iovino
in the case of Banach space structures through the re-definition of
$\lambda$-stability in a manner that takes the metric into account
\cite{iov:stab}.
The two definitions agree: $T$ is $\lambda$-stable for all big enough $\lambda$
(by Iovino) if and only if it is stable and supersimple by the
definition above.
(This follows from the metric stability spectrum theorem
\cite[Theorem~4.13]{pezz:Morley} as in the classical case.)
We find this a fairly reassuring evidence that the definitions are
indeed ``correct''.

\medskip\noindent
The present paper attempts to address some questions these definitions
raise:

First, the definition of supersimplicity above is somewhat disturbing,
as it translates the two occurrences of ``finite'' in the original
definition differently.
We would prefer something of the form: ``$T$ is supersimple if
for every $a$, $\varepsilon > 0$ and $B$, there is a distance $\delta > 0$
such that $a^\varepsilon \ind_{B^\delta} B$.''
Of course, in order to do that we would first have to give meaning to
$a^\varepsilon \ind_{B^\delta} B$.
This is addressed in \fref{sec:SS}.

A second issue arises from the theory $T^P$ of beautiful (or lovely)
pairs of models of a stable (or simple) theory $T$
\cite{poiz:paires,ppv:pairs,pezz:nfopairs}.
It was shown (by Buechler \cite{bue:PsProj}, later extended by
Vassiliev \cite{vas:su1p}) that such theories of pairs of models of
$T$ can be used as means for obtaining information on $T$ itself.
More precisely, for a rank one theory $T$, the rank of $T^P$ yields
information about the geometry of $T$.
While metric structures can never have ``rank one'', it is natural to
seek to compare the rank of $T^P$ with that of $T$, when $T$ is
superstable or supersimple.
In order for such a course of action to be feasible, one would first
have to show that in that case $T^P$ is supersimple
as well.

In \cite{ppv:pairs} and in \cite{pezz:nfopairs} two distinct proofs
are given to the effect that if $T$ is a supersimple first order
theory, or more generally, a ``supersimple'' cat in the wrong sense
that does not take into account the metric, then so is the theory of
its lovely pairs $T^P$.
Due to the nature of independence in $T^P$, both proofs inevitably use
the fact that $T$ is supersimple at least twice.
These proofs do not extend to the corrected definition of
supersimplicity: without entering into details, on the first
application of supersimplicity we see that $a^\varepsilon \ind_{B_0} B$ for some
finite $B_0 \subseteq B$, but then we cannot apply supersimplicity to
the type of $B_0$ over something else.
On the other hand, if we did have $a^\varepsilon \ind_{B^\delta} B$ for some
$\delta > 0$, we could apply supersimplicity to types of $B^\delta$ over
another set, and the proof may be salvaged.
This is addressed in \fref{sec:SSPairs}.

Thus the novelty of this paper is a notion of
independence over virtual tuples, i.e., over objects of the form
$B^\delta$.
This is a venture into difficult and unsound terrain (for example,
the results of \cite{pezz:ultind}, while dealing
with ultraimaginary elements rather than virtual ones,
suggest that independence over objects which are not ``at least''
hyperimaginary should be approached with extreme caution and without
too many hopes).
While one can come up with many definitions for such a notion of
independence it is not at all obvious to
come up with one which satisfies the usual axioms, or even any
``large'' subset thereof.
Our notion of independence is merely shown to satisfy some
partial transitivity properties (\fref{prp:eIndTrans}), and at the
same time to yield an equivalent characterisation of supersimplicity
(\fref{thm:SSChar}).
We content ourselves with such a modest achievement as it does suffice
to close the gap in the proof that if $T$ is supersimple then so is
$T^P$ (\fref{thm:TPSS}).

Other properties, such as symmetry, are lost (when considering
independence over virtual tuples).
For example, in a
Hilbert space, if $u$ and $v$ are unit vectors and $\varepsilon \leq \sqrt{2}$,
then one can show that:
\begin{align*}
  u \ind_{v^\varepsilon} v & \Longleftrightarrow \|u-v\| \geq \varepsilon \\
  v \ind_{v^\varepsilon} u & \Longleftrightarrow \|u-v\| = \sqrt{2} \; (\Longleftrightarrow u\ind v).
\end{align*}
The question of finding a notion of independence which has more of the
usual properties (e.g., symmetry, full transitivity, extension)
without losing those we need for the results presented in this paper,
remains an open (and difficult) one.

\medskip\noindent

We assume basic familiarity with the setting of compact abstract
theory and simplicity theory in this setting (see \cite{pezz:bslcat}).

We use $a,b,c,\ldots$ to denote possibly infinite tuples of elements in
the universal domain of the theory under consideration.
When we want them to stand for a single element, we say so explicitly.
Similarly, $x,y,z,\ldots$
denote possibly infinite tuples of variables.
Greek letters $\varepsilon,\delta,\ldots$ denote values in the interval $[0,\infty]$, or
possibly infinite tuples thereof.

\section{Supersimplicity}
\label{sec:SS}

\begin{conv}
  We work in a Hausdorff cat $T$.
\end{conv}

We recall from \cite{pezz:Morley} that every sort admits a definable
metric (i.e., a metric $d$ such that for every $r \in \setR^+$, the
properties $d(x,y) \leq r$ and $d(x,y) \geq r$ are type-definable), or, if
not, can be decomposed into uncountably many imaginary sorts each of
which does admit such a metric.
Therefore, at the price of possibly working with a multi sorted
language, we may assume that all sorts admit a definable metric.
For convenience we will proceed as if there is a single home sort, but
the generalisation to many sorts should be obvious.

Let us fix, once and for all, a definable metric on the home sort.
By \cite{pezz:Morley} we know that any two such metrics are uniformly
equivalent, so notions such as supersimplicity and superstability are
not affected by our choice of metric.
If the reader wishes nevertheless to avoid such an arbitrary choice,
she or he may use the notion of abstract distances from
\cite{pezz:Morley} instead of real-valued distances whose
interpretation depends on a metric function.

Distances on tuples will be viewed as tuples of distances of
singletons:
\begin{dfn}
  \label{dfn:DistTuple}
  Let $I$ be a set of indices.
  \begin{enumerate}
  \item 
    If $\bar a$ and $\bar b$ are $I$-tuples then we consider
    $d(\bar a,\bar b)$ to be the $I$-tuple
    $(d(a_i,b_i)\colon i \in I) \in [0,\infty]^I$
    (In fact, a definable metric is necessarily bounded, so we can
    replace $\infty$ with some real number here, but keeping $\infty$ as a
    special distance is convenient).
  \item If $\bar \varepsilon, \bar \varepsilon' \in [0,\infty]^I$, we say that
    $\bar \varepsilon \leq \bar \varepsilon'$ if $\varepsilon_i \leq \varepsilon'_i$ for all $i \in I$.
  \item If $\bar \varepsilon, \bar \varepsilon' \in [0,\infty]^I$, we say that
    $\bar \varepsilon < \bar \varepsilon'$ if $\varepsilon_i < \varepsilon'_i$ for all $i \in I$,
    \emph{and} $\varepsilon_i' = \infty$ for all but finitely many $i \in I$.
    \\
    For the purpose of this definition we use the convention
    that $\infty < \infty$.
  \item Given
    $\bar \varepsilon \in [0,\infty]^I$ and $\varepsilon' \in [0,\infty]$, we understand
    statements such as $\bar \varepsilon < \varepsilon'$, $\bar \varepsilon \leq \varepsilon'$, etc., by
    replacing $\varepsilon'$ with the $I$-tuple all of whose coordinates are
    $\varepsilon'$.
  \end{enumerate}
\end{dfn}
From our convention that $\infty < \infty$ it follows that $\bar \varepsilon < \infty$ for all
$\bar \varepsilon \in [0,\infty]^I$, and $\bar \varepsilon > 0 \Longrightarrow \bar \varepsilon > \frac{1}{2}\bar \varepsilon$
(where $\frac{1}{2}(\varepsilon_i)_{i\in I} = (\frac{\varepsilon_i}{2})_{i\in I}$, and
$\frac{\infty}{2} = \infty < \infty$).

Superstability and supersimplicity in the first order context deal
with properties of independence of finite tuples of elements.
In the metric setting we replace ``finite tuple'' with a ``virtually
finite'' one:

\begin{dfn}
  \label{dfn:Virtual}
  \begin{enumerate}
  \item 
    A \emph{virtual element} is formally a pair $(a,\varepsilon)$, where $a$ is a
    singleton and $\varepsilon \in [0,\infty]$.
    Usually a virtual element $(a,\varepsilon)$ will be denoted by $a^\varepsilon$, and we
    think of this conceptually as ``the element $a$ up to distance
    $\varepsilon$''.
  \item 
    A \emph{virtual tuple} is a tuple of virtual elements,
    i.e., an object of the form $(a_i^{\varepsilon_i}\colon i \in I)$.
    This can also be denoted by $\bar a^{\bar \varepsilon}$, or simply $a^\varepsilon$,
    as single lowercase letters may denote arbitrary tuples.
  \item As in \fref{dfn:DistTuple}, if $\bar a$ is an $I$-tuple, and
    $\varepsilon \in [0,\infty]$ a single distance, we understand $\bar a^\varepsilon$ as
    $\bar a^{\bar \varepsilon}$, where $\bar \varepsilon$ is a tuple consisting of $I$
    repetitions of $\varepsilon$.
  \item A \emph{virtually finite tuple} is a virtual tuple
    $\bar a^{\bar \varepsilon}$ such that $\bar \varepsilon > 0$.
    We remind the reader that according to
    \fref{dfn:DistTuple},
    this means that $\varepsilon_i > 0$ for all $i$,
    and $\varepsilon_i = \infty$ for all but finitely many $i \in I$.
  \end{enumerate}
\end{dfn}

\begin{ntn}
  Unless explicitly said otherwise, $a$, $b$, etc., denote possibly
  infinite tuples of elements in a model.
  Similarly, $\varepsilon$, $\delta$, etc., denote possibly infinite tuples in
  $[0,\infty]$.
  Thus $a^\varepsilon$ denotes a virtual tuple, possibly infinite, with the
  implicit understanding that $a$ and $\varepsilon$ are of the same length.

  If we wish to render explicit the fact that these are tuples we may
  use notation such as $\bar a^{\bar \varepsilon}$ etc.
\end{ntn}

We identify a tuple $a$ with the virtual tuple $a^0$: knowing $a$ up
to distance $0$ means knowing $a$ precisely.
More generally, if the relation $d(x,y) \leq \varepsilon$ is transitive (e.g., in
the rare case where  the metric is an ultrametric) then it is an
equivalence relation, and we can identify the virtual tuple $a^\varepsilon$
with the hyperimaginary $a/[d(x,y) \leq \varepsilon]$.

Similarly, we identify a virtual tuple $\bar a^{\bar \varepsilon}$ with any
virtual tuple obtained by omitting or adding virtual elements of the
form $a_i^\infty$: knowing $a_i$ up to distance $\infty$ means not knowing
$a_i$ at all.
(The reader will see that these identifications are consistent with
the way we use virtual tuples later on.)

Note that every virtually finite $I$-tuple $\bar a^{\bar \varepsilon}$ can be
thus identified with the sub-tuple corresponding to
$J = \{i\in I\colon \varepsilon_i < \infty\}$, which is finite as $\bar \varepsilon > 0$.

This identification allows a convenient re-definition of the notion of
a sub-tuple:
\begin{dfn}
  A \emph{virtual sub-tuple} of a virtual tuple $a^\varepsilon$ is a virtual
  tuple (which can be identified with) $a^{\varepsilon'}$ for some $\varepsilon' \geq \varepsilon$.
\end{dfn}

\begin{rmk}
  The notion of a virtual sub-tuple extends the ``ordinary'' notion of
  sub-tuple.
  Indeed, let $\bar a^{\bar \varepsilon}$ be a virtual $I$-tuple, and
  $\bar b^{\bar \delta}$ a sub-tuple in the ordinary sense, i.e., given by
  restricting so a subset of indices $J \subseteq I$.
  For $i \in I$ define $\varepsilon'_i = \varepsilon_i$ if
  $i \in J$, and $\varepsilon'_i = \infty$ otherwise.
  Then $\bar \varepsilon' \geq \bar \varepsilon$, and $\bar b^{\bar \delta}$ can be identified
  with the virtual sub-tuple $\bar a^{\bar \varepsilon'}$.
\end{rmk}

We define types of virtual tuples:
\begin{dfn}
  As we defined in \cite{pezz:Morley}, if $p(x)$ is a partial type in
  a tuple of variables $x$, and $\varepsilon$ is a tuple of distances of the
  same length, then $p(x^\varepsilon)$ is defined as the partial type
  $\exists y\, (p(y) \land d(x,y) \leq \varepsilon)$.
  Since $d(x,y) \leq \varepsilon$ is a type-definable property, this is indeed
  expressible by a partial type.

  We define $\tp(a^\varepsilon)$ as $p(x^\varepsilon)$ where $p = \tp(a)$.
  Similarly, if $p(x,y) = \tp(a,b)$ then $\tp(a^\varepsilon/b^\delta)$ is
  $p(x^\varepsilon,b^\delta)$.
\end{dfn}

\begin{rmk}
  If $a' \models \tp(a^\varepsilon/b^\delta)$ then we say that $a^\varepsilon$ and ${a'}^\varepsilon$ have the
  same type over $b^\delta$, in symbols $a^\varepsilon \equiv_{b^\delta} {a'}^\varepsilon$.
  This is a symmetric relation.
\end{rmk}
\begin{proof}
  Assume that $a' \models \tp(a^\varepsilon/b^\delta)$.
  Then there are $a''b' \equiv ab$ such that $d(a'b,a''b') \leq \varepsilon\delta$.
  Let $a''',b''$ be such that $a''b'a'b \equiv aba'''b''$.
  Then $d(ab,a'''b'') \leq \varepsilon\delta$ and $a'''b'' \equiv a'b$, whereby
  $a \models \tp({a'}^\varepsilon/b^\delta)$.
\end{proof}


We recall from \cite{pezz:Morley}:
\begin{dfn}
  $T$ is supersimple if for every virtually finite tuple $a^\varepsilon$
  and set $A$, there is a finite subset $A_0 \subseteq A$ such that
  $\tp(a^\varepsilon/A)$ does not divide over $A_0$.
\end{dfn}
If we replace ``virtually finite tuple'' with ``virtually finite
singleton'' (i.e., $a$ is a singleton) we obtain an equivalent
definition.

We now turn to the principal new definition in this paper, of
independence \emph{over} virtual tuples.

\begin{dfn}
  We say that an indiscernible sequence $(b_i : i < \omega)$ \emph{could
    be in $\tp(b/c^\rho)$} if there are $(c_i)$ such that:
  \begin{enumerate}
  \item $(b_ic_i : i < \omega)$ is an indiscernible sequence in $\tp(bc)$.
  \item For all $i,j \leq \omega$: $d(c_i,c_j) \leq \rho$.
  \end{enumerate}
\end{dfn}

\begin{rmk}
  \label{rmk:ERDist}
  Let $E_\rho(x,y)$ be the relation $d(x,y) \leq \rho$.
  Assume that $E_\rho$ happens to be transitive, and therefore an
  equivalence relation (this would happen in the rare case that the
  metric is an ultrametric, and also if $\rho = 0$).
  Then a sequence $(b_i : i < \omega)$ could be in $\tp(b/c^\rho)$
  if and only if it has an automorphic image in $\tp(b/(c/E_\rho))$.

  In particular, if $\rho = 0$ then $E$ is equality, and $(b_i)$ could
  be in $\tp(b/c^0)$ if and only if it has an automorphic image in
  $\tp(b/c)$.
  This justifies the terminology, as well as the identification
  between $c^0$ and $c$.
\end{rmk}

\begin{dfn}
  We say that $a^\varepsilon \ind_{c^\rho} b$ if every indiscernible sequence
  that could be in $\tp(b/c^\rho)$ could also be in $\tp(b/a^\varepsilon c^\rho)$.
\end{dfn}
As explained in the introduction, this notion of independence has very
few ``nice'' properties, although these suffice for the application we
seek.
We will not dare to extend it to, say, independence of the form
$a^\varepsilon \ind_{c^\rho} b^\delta$ without being able to show that such extension
has useful properties.

When restricting to independence over non-virtual (real or even
hyperimaginary) tuples, it is not true that $a^\varepsilon \ind_b c$  if and
only of $\tp(a^\varepsilon/bc)$ does not divide over $c$.
These notions are close enough to being equivalent, though:
\begin{lem}
  \label{lem:eInd}
  Assume that $T$ is simple.
  For $a^\varepsilon$, $b$ and $c$, the following conditions imply one another
  from top to bottom:
  \begin{enumerate}
  \item $a^\varepsilon \ind_c b$ (remember to identify $c$ with $c^0$).
  \item There is a Morley sequence for $b$ over $c$ which could be in
    $\tp(b/a^\varepsilon c)$.
  \item $\tp(a^\varepsilon/bc)$ does not divide over $c$.
  \item $a^{2\varepsilon} \ind_c b$.
  \end{enumerate}
\end{lem}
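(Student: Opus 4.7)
The plan is to prove the three implications (i)$\Rightarrow$(ii), (ii)$\Rightarrow$(iii), (iii)$\Rightarrow$(iv) in turn, invoking simplicity of $T$ at every step.

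For (i)$\Rightarrow$(ii), I would fix a Morley sequence $(b_i)_{i<\omega}$ for $\tp(b/c)$, which exists by simplicity. Setting $c_i := c$ for all $i$ yields $d(c_i,c_j) \leq 0$, and $(b_i c_i)_{i<\omega}$ is an indiscernible sequence in $\tp(bc)$ because $(b_i)$ is $c$-indiscernible with each $b_i \models \tp(b/c)$. Hence this Morley sequence could be in $\tp(b/c^0) = \tp(b/c)$, and hypothesis (i) produces witnesses showing it could also be in $\tp(b/a^\varepsilon c)$, which is precisely (ii).

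For (ii)$\Rightarrow$(iii), I would unfold (ii) as a Morley sequence $(b_i)$ for $\tp(b/c)$ together with tuples $(a_i)$ such that $(a_i b_i)$ is $c$-indiscernible, $a_i b_i c \equiv abc$, and $d(a_i,a_j) \leq \varepsilon$ for all $i,j$. The congruence gives $a_i \models \tp(a/b_i c)$ under the natural identification, and then $d(a_0,a_i) \leq \varepsilon$ shows $a_0 \models \tp(a^\varepsilon/b_i c)$ for every $i$. Thus $a_0$ simultaneously realises the family $\{\tp(a^\varepsilon/b_i c)\}_{i<\omega}$ along the Morley sequence, and Kim's lemma for partial types in simple theories then yields that $\tp(a^\varepsilon/bc)$ does not divide over $c$.

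For (iii)$\Rightarrow$(iv), I would take any indiscernible sequence $(b_i)$ which could be in $\tp(b/c^0)$; after a $c$-automorphism assume it is $c$-indiscernible with $b_0 = b$. Non-dividing from (iii) furnishes $a^*$ simultaneously realising $\bigcup_i \tp(a^\varepsilon/b_i c)$, producing $a_i$ with $a_i b_i c \equiv abc$ and $d(a^*,a_i) \leq \varepsilon$, whence $d(a_i,a_j) \leq 2\varepsilon$ by the triangle inequality. The delicate step is to upgrade $(a_i b_i)$ to a $c$-indiscernible sequence while keeping the $b_i$ intact: apply Ramsey plus compactness to $(a_i b_i)$ to extract a $c$-indiscernible $(a'_i b'_i)$ realising its EM-type over $c$; since $(b'_i)$ and $(b_i)$ are $c$-indiscernible with the same EM-type, a $c$-automorphism carries $(b'_i)$ onto $(b_i)$ and transports $(a'_i)$ to $(a''_i)$ with $(a''_i b_i)$ $c$-indiscernible, $a''_i b_i c \equiv abc$, and $d(a''_i,a''_j) \leq 2\varepsilon$ (the last being a type-definable condition in a Hausdorff cat and so preserved in the EM-type). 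This is exactly a witness that $(b_i)$ could be in $\tp(b/a^{2\varepsilon}c)$.

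The main obstacle, and the reason the four clauses form a chain rather than a circle, is the unavoidable doubling from $\varepsilon$ to $2\varepsilon$ in the last implication: non-dividing only clusters the $a_i$ within $\varepsilon$ of a common centre $a^*$, so their pairwise distances are controlled only up to $2\varepsilon$ via the triangle inequality. A secondary but important technical point is the EM-type rigidity of $c$-indiscernible sequences, which is what allows the extracted indiscernible sequence to be pulled back onto the original $b_i$'s.
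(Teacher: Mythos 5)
Your steps (i)$\Rightarrow$(ii) and (iii)$\Rightarrow$(iv) are fine, and the latter in particular is a legitimate variant of the paper's argument: you extract a common centre $a^*$ directly from non-dividing and then re-indiscernibilize by Ramsey and a $c$-automorphism, whereas the paper first produces $a'$ with $a' \ind_c b$, shows the sequence could be in $\tp(b/a'c)$, and reads off the $a_i$ from $a'c$-conjugacy; both are correct and the EM-type rigidity you invoke is the right tool.

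The gap is in (ii)$\Rightarrow$(iii), at the very first line. You unfold (ii) as a Morley sequence $(b_i)$ for $b$ \emph{over $c$} together with $(a_i)$ such that $a_i b_i \emph{c} \equiv abc$ and $(a_ib_i)$ is \emph{$c$}-indiscernible. But the definition of ``$(b_i)$ could be in $\tp(b/a^\varepsilon c^0)$'' only produces $(a_i)$ and a constant witness $c'$ with $(a_ib_ic')$ indiscernible over $\emptyset$ and $a_ib_ic' \equiv abc$; there is no guarantee that $c' = c$, and you cannot make it so. If you conjugate $c'$ to $c$, you move $(b_i)$ and destroy the fact that it is a Morley sequence over $c$ (which is exactly what is anchored to $c$ in statement (ii) and what Kim's lemma needs); if you leave things alone, the base of the realised types is $b_ic'$, not $b_ic$, and Kim's lemma over $c$ does not apply since $(b_i)$ need not be Morley over $c'$. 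This is precisely the discrepancy the paper's proof is engineered to handle: it keeps $c$ and $c'$ distinct, uses that $(b_i)$ is Morley over $c$ only through the rank identity $\Dx{b_\omega/b_{<\omega}} = \Dx{b/c}$, uses the $c'$-indiscernibility only through the end-extension independence $b_\omega \ind_{b_{<\omega}} a_{<\omega}c'$, squeezes equality in a chain of $\Dx{-}$-rank inclusions to deduce $b_\omega \ind_{c'} a_0$, and only at the very end invokes $a_\omega b_\omega c' \equiv abc$ and invariance to pass back from $c'$ to $c$. Your Kim's-lemma route collapses the two bases at the outset and therefore does not go through as written.
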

\begin{proof}
  \begin{cycprf}
  \item[\impnext] By definition.
  \item[\impnext] We recall the $\Dx{-}$ ranks from
    \cite{pezz:fnctcat}:
    Fixing the tuple $x$, we define $\Xi = \Xi(x)$ as the set of all pairs
    $(\varphi(x,y),\psi(y_{<k}))$ ($y$ and $k$ may vary) such that
    $\varphi$ and $\psi$ are positive formulae and $\psi$ is a $k$-inconsistency
    witness for $\varphi$, i.e.,
    \begin{gather*}
      T \vdash \lnot\exists xy_{<k} \,\left( \psi(y_{<k}) \land \bigwedge_{i<k} \varphi(x,y_i) \right).
    \end{gather*}
    If $p(x)$ is a partial type with parameters in $A$,
    $\Dx{p}$ is a subset of $\Xi^{<|T|^+}$ such that
    for $\xi_{<\alpha} \in \Xi^\alpha$:
    \begin{itemize}
    \item $\alpha = 0$: $\xi_{<\alpha} \in \Dx{p}$ if and only if $p$ is consistent.
    \item $\alpha$ limit: $\xi_{<\alpha} \in \Dx{p}$ if and only if
      $\xi_{<\beta} \in \Dx{p}$ for all $\beta <\alpha$.
    \item $\alpha = \beta+1$, $\xi_\beta = (\varphi(x,y),\psi(y_{<k}))$:
      $\xi_{<\alpha} \in \Dx{p}$ if and only if
      there exists an $A$-indiscernible sequence $(b_i\colon i < \omega)$ in the
      sort of $y$ such that $\models \psi(b_{<k})$ and $\xi_{<\beta} \in \Dx{p\cup\{\varphi(x,b_0)\}}$.
    \end{itemize}
    We recall that this rank characterises independence (for $T$
    simple and thick, and thus in particular simple and Hausdorff):
    If $A \subseteq B$ then $p \in \tS(B)$
    does not divide over $A$ if and only if
    $\Dx{p} = \Dx{p\rest_A}$.

    So let $(b_i\colon i < \omega)$ be a Morley sequence for $b$ over $c$ which
    could be in $\tp(b/a^\varepsilon c)$.
    Then there are $(a_i\colon i < \omega)$ and $c'$
    such that $(a_ib_ic'\colon i < \omega)$ is an indiscernible
    sequence in $\tp(abc)$ and $d(a_i,a_j) \leq \varepsilon$ for all $i,j<\omega$.
    Extend the sequence to length $\omega+1$.
    Then by standard arguments we have $b_\omega \ind_{b_{<\omega}} a_{<\omega}c'$, so:
    \begin{gather*}
      \Dx{b_\omega/b_{<\omega}} = \Dx{b_\omega/b_{<\omega}a_{<\omega}c'} \subseteq \Dx{b_\omega/a_0c'} \subseteq
      \Dx{b_\omega/c'}.
    \end{gather*}
    On the other hand, since $(b_i)$ is a Morley sequence over $c$:
    \begin{gather*}
      \Dx{b_\omega/b_{<\omega}} = \Dx{b/c} = \Dx{b_\omega/c'}
    \end{gather*}
    Therefore equality holds all the way and we have $b_\omega \ind_{c'}
    a_0$.
    Since $d(a_0,a_\omega) \leq \varepsilon$, it follows that
    $\tp(a_\omega^\varepsilon/b_\omega c')$ does not divide over $c'$, and by invariance
    $\tp(a^\varepsilon/bc)$ does not divide over $c$.
  \item[\impnext] We assume that $\tp(a^\varepsilon/bc)$ does not divide over
    $c$.
    Then there exists $a'$ such that $a' \ind_c b$ and $d(a,a') \leq
    \varepsilon$.
    Let $(b_i)$ be any indiscernible sequence that could be in
    $\tp(b/c)$.
    Then we might as well assume that it is in $\tp(b/c)$ and since
    $a' \ind_c b$ it can even be in $\tp(b/a'c)$.
    Find now $(a_i)$ such that $a_ib_i \equiv_{a'c} ab$.
    Then we may always choose them such that $(a_ib_i)$ is
    $c$-indiscernible, and $d(a_i,a') \leq \varepsilon$ for all $i$ yields
    $d(a_i,a_j) \leq 2\varepsilon$ for all $i,j$, as required.
  \end{cycprf}
\end{proof}

This means that $a \ind_c b$ if and only if $\tp(a/bc)$ does not
divide over $c$ (since $2\cdot0 = 0$), so this definition agrees with the
usual definition of independence of ordinary (i.e., non-virtual)
elements.

We can continue \fref{rmk:ERDist} to show that if the tuple
of distances $\rho$ defines an equivalence relation $E_\rho$
then $a^\varepsilon \ind_{c^\rho} b$ if and only if $a^\varepsilon \ind_{c/E_\rho} b$ (here
$c/E_\rho$ is viewed as hyperimaginary, rather than virtual).
If $\varepsilon$ also defines an equivalence relation $E_\varepsilon$, then
$a^\varepsilon \ind_{c^\rho} b$ if and only if $a/E_\varepsilon \ind_{c/E_\rho} b$.

Also, \fref{lem:eInd} and the fact that
$\varepsilon > 0 \Longrightarrow \frac{1}{2}\varepsilon > 0$ give:

\begin{prp}
  \label{prp:TSS}
  $T$ is supersimple if and only if for every virtually finite
  tuple (singleton) $a^\varepsilon$
  and set $A$ there is a finite subset $A_0 \subseteq A$ such that
  $a^\varepsilon \ind_{A_0} A$.
\end{prp}

\begin{prp}
  \label{prp:eIndTrans}
  Independence satisfies right downward transitivity, left upward
  transitivity, and two-sided monotonicity:
  \begin{enumerate}
  \item If $a^\varepsilon \ind_{c^\rho} b$ and $\delta$ is any tuple of distances of
    the length of $b$, then $a^\varepsilon \ind_{b^\delta c^\rho} b$.
  \item If $a^\varepsilon \ind_{c^\rho} b$ and $d^\upsilon \ind_{a^\varepsilon c^\rho} b$ then
    $a^\varepsilon d^\upsilon \ind_{c^\rho} b$.
  \item If $a^\varepsilon \ind_{c^\rho} bd$ and $\varepsilon' \geq \varepsilon$ (i.e., if $a^{\varepsilon'}$
    is a virtual sub-tuple of $a^\varepsilon$) then
    $a^{\varepsilon'} \ind_{c^\rho} b$.
  \end{enumerate}
\end{prp}
\begin{proof}
  \begin{enumerate}
  \item Let $(b_i)$ be an indiscernible sequence that could be in
    $\tp(b/b^\delta c^\rho)$.
    This is the same as saying that $(b_i)$ could be in
    $\tp(b/c^\rho)$ and $d(b_i,b_j) \leq \delta$ for all $i,j<\omega$.
    As we assume that $a^\varepsilon \ind_{c^\rho} b$, the sequence $(b_i)$ could
    be in $\tp(b/a^\varepsilon c^\rho)$; since
    $d(b_i,b_j) \leq \delta$ it could also be in $\tp(b/a^\varepsilon b^\delta c^\rho)$, as
    required.
  \item If $(b_i)$ is indiscernible and could be in $\tp(b/c^\rho)$ then
    it could also be in $\tp(b/a^\varepsilon c^\rho)$ and therefore in
    $\tp(b/a^\varepsilon c^\rho d^\upsilon)$.
  \item Let $(b_i)$ be an indiscernible sequence that could be in
    $\tp(b/c^\rho)$.
    By standard arguments we can find $(d_i)$ such that $(b_id_i)$ is
    indiscernible and could be in $\tp(bd/c^\rho)$.
    As we assume that $a^\varepsilon \ind_{c^\rho} bd$, it could also be in
    $\tp(bd/a^\varepsilon c^\rho)$.
    Therefore $(b_i)$ could be in $\tp(b/a^\varepsilon c^\rho)$ and \emph{a
      fortiori} in $\tp(b/a^{\varepsilon'}c^\rho)$.
  \end{enumerate}
\end{proof}

We obtain a more general form, in this context, of the finite
character of independence:
\begin{prp}
  \label{prp:FinChar}
  For all $a^\varepsilon$, $b$ and $c^\rho$: $a^\varepsilon \ind_{c^\rho} b$ if and only if
  $a^{\varepsilon'} \ind_{c^\rho} b'$ for all $\varepsilon' > \varepsilon$ and finite $b' \subseteq b$.
  (By our approach, $a^{\varepsilon'}$ should be viewed as a finite
  sub-tuple of $a^\varepsilon$, since $\varepsilon' > \varepsilon$.)

  In particular, $a \ind_c b$ if and only if
  $a^\varepsilon \ind_c b$ for all $\varepsilon > 0$.
\end{prp}
\begin{proof}
  Left to right is by monotonicity.
  For right to left, assume that $a^\varepsilon \nind_{c^\rho} b$.
  Then there is an indiscernible sequence
  $(b_i)$ that could be in $\tp(b/c^\rho)$ but not in $\tp(b/a^\varepsilon c^\rho)$.
  Letting $p(x,y,z) = \tp(a,b,c)$, the latter means that the following
  is inconsistent:
  \begin{gather*}
    \bigwedge_{i<\omega} p(x_i,b_i,z_i) \land
    \bigwedge_{i,j<\omega} [d(x_i,x_j) \leq \varepsilon \land d(z_i,z_j) \leq \rho].
  \end{gather*}
  Since $d(x_i,x_j) \leq \varepsilon$ is logically equivalent to
  $\bigwedge_{\varepsilon'>\varepsilon} d(x_i,x_j) \leq \varepsilon'$, and the family of all $\varepsilon' > \varepsilon$ is
  closed for finite infima, we obtain by compactness some
  $\varepsilon' > \varepsilon$ such that the above is still inconsistent with $\varepsilon'$
  instead of $\varepsilon$.
  Therefore $a^{\varepsilon'} \nind_{c^\rho} b$.
  Replacing $b$ with a finite sub-tuple is similar (and fairly
  standard).
\end{proof}

Let us recall from   \cite[Lemma~1.2]{pezz:catsim}
the following useful fact about ``extraction'' of
indiscernible sequence from long sequences:
\begin{fct}
  \label{fct:ERind}
  Let $A$ be a set of parameters, and $\lambda \geq \beth_{|\tS_\kappa(A)|^+}$.
  Then
  for any sequence $(a_i : i < \lambda)$ of $\kappa$-tuples there is an
  $A$-indiscernible sequence $(b_i : i < \omega)$ such that for all
  $n < \omega$ there are $i_0 < \ldots < i_{n-1} < \lambda$ for which
  $\tp(b_0\ldots b_{n-1}/A) = \tp(a_{i_0}\ldots a_{i_{n-1}}/A)$.
\end{fct}

\begin{thm}
  \label{thm:SSChar}
  $T$ is supersimple if and only if for every virtually finite tuple
  (singleton) $a^\varepsilon$, and any tuple $b$,
  there is a virtually finite sub-tuple $b^\delta$ of $b$ (i.e., there
  exists $\delta > 0$ of the appropriate length) such that
  $a^\varepsilon \ind_{b^\delta} b$.
\end{thm}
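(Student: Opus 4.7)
The plan is to prove both implications using \fref{prp:TSS} as a bridge between the classical ``finite $A_0 \subseteq A$'' formulation of supersimplicity and the new virtually-finite one. The structural observation underlying everything is that in $a^\varepsilon \ind_{b^\delta} b$ the base and the right-hand tuple coincide: writing $A_0 = b\rest_J$ for the finite set of coordinates where $\delta_j < \infty$ (so that $b^\delta$ is identified with $A_0^{\delta_0}$ for $\delta_0 = \delta\rest_J > 0$), the companion sequence $c_i$ required by ``could be in $\tp(b/b^\delta)$'' is forced by $\tp(b,b)$ to equal $b_i$, and the whole condition collapses to: ``every indiscernible $(b_i)_{i<\omega}$ of type $\tp(b)$ with $d(b_i\rest_J, b_j\rest_J) \leq \delta_0$ admits an indiscernible lift $(a_i, b_i) \models \tp(ab)$ with $d(a_i, a_j) \leq \varepsilon$.'' Correspondingly, $a^\varepsilon \ind_{A_0} b$ demands the same, but only for sequences whose $J$-projection is constantly $A_0$.

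For $(\Leftarrow)$, given a set $A$, enumerate it as a tuple $b$ and apply the hypothesis to obtain $\delta > 0$ with $a^\varepsilon \ind_{b^\delta} b$; then take $A_0 \subseteq A$ to be the associated finite sub-tuple. Any indiscernible sequence $(b_i)$ witnessing the premise of $a^\varepsilon \ind_{A_0} A$ has $b_i\rest_J = A_0$ built into its type over $A_0$, so the distance hypothesis holds trivially; the lift produced by $a^\varepsilon \ind_{b^\delta} b$ is automatically $A_0$-indiscernible because $A_0$ sits inside each $b_i$. Thus $a^\varepsilon \ind_{A_0} A$, and \fref{prp:TSS} delivers supersimplicity.

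For $(\Rightarrow)$, apply \fref{prp:TSS} to get a finite $A_0 = b\rest_J \subseteq b$ with $a^\varepsilon \ind_{A_0} b$, and hunt for $\delta_0 > 0$ on $J$-coordinates making $a^\varepsilon \ind_{A_0^{\delta_0}} b$ hold. Argue by contradiction: if no such $\delta_0$ works, choose for each $n < \lambda$ (with $\lambda$ sufficiently large and the $\delta_0^n > 0$ arranged cofinally close to $0$) a bad indiscernible $\bar b^n = (b_i^n)_{i<\omega}$ of type $\tp(b)$ satisfying $d(b_i^n\rest_J, b_j^n\rest_J) \leq \delta_0^n$ but admitting no lift. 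Apply \fref{fct:ERind} to the long sequence $(\bar b^n)_{n<\lambda}$ to extract an indiscernible sequence $(D^{(k)})_{k<\omega}$ with $\tp(D^{(0)},\dots,D^{(k-1)}) = \tp(\bar b^{n_0},\dots,\bar b^{n_{k-1}})$ for increasing $n_0 < \dots < n_{k-1}$ depending on $k$. Since $\tp(D^{(0)}) = \tp(\bar b^n)$ for $n$ ranging over an unbounded set, $d(b_i^{D^{(0)}}\rest_J, b_j^{D^{(0)}}\rest_J) \leq \delta_0^n$ holds for arbitrarily small $\delta_0^n$, forcing the $J$-projection of $D^{(0)}$ to be constant, while ``no lift'' transfers through the type equality by automorphism-invariance. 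An automorphism sending this constant projection to $A_0$ turns $D^{(0)}$ into an indiscernible sequence over $A_0$ with $b_i \equiv_{A_0} b$ and no lift, contradicting $a^\varepsilon \ind_{A_0} b$.

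The main point to justify is the type-invariance of ``no lift'': one must verify that the partial type expressing the existence of an indiscernible lift depends only on $\tp(\bar b)$, so non-existence propagates through the equality $\tp(D^{(0)}) = \tp(\bar b^{n_0})$. This is what allows the argument to sidestep a more delicate topological limit step, which would otherwise be demanded since ``no lift'' is an \emph{open} (rather than closed) condition in the type space of the parameter sequence.
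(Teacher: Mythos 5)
Your $(\Leftarrow)$ direction is correct (and essentially the same as the paper's, which phrases it via right downward transitivity rather than your direct unwinding of ``could be in $\tp(b/A_0)$''). The $(\Rightarrow)$ direction, however, has a fatal gap in the extraction step, and the paper's proof is organized precisely to avoid the trap you fall into.

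The plan is to apply \fref{prp:TSS} once to obtain a finite $A_0 = b\rest_J$ with $a^\varepsilon \ind_{A_0} b$, and then prove a ``finite approximation'' claim --- that some $\delta_0 > 0$ already gives $a^\varepsilon \ind_{A_0^{\delta_0}} b$ --- by extracting an indiscernible $\omega$-sequence from a long sequence $(\bar b^n)_{n<\lambda}$ of bad witnesses whose $J$-diameters $\delta_0^n$ tend to $0$. But \fref{fct:ERind} requires $\lambda \geq \beth_{|\tS_\kappa(A)|^+}$, which is (much) bigger than $\omega_1$, while $[0,\infty)$ contains no strictly decreasing sequence of length $\omega_1$: any decreasing map $\lambda \to (0,\infty)$ is eventually constant after the first $\omega$ steps. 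So you cannot simultaneously make the source sequence long enough for the extraction lemma and have $\delta_0^n \to 0$ along a tail (or even along a decreasing arrangement). If instead you only ask that the $\delta_0^n$ be ``cofinally close to $0$'' without being decreasing, then the indices $i_0 < i_1 < \cdots$ that \fref{fct:ERind} hands you give no control over $\delta_0^{i_k}$, and you cannot conclude that the extracted $J$-projection is constant. And as you yourself note, one cannot replace the extraction with a topological limit: ``admits no lift'' (inconsistency of a partial type over $\bar b$) is an open condition on $\tp(\bar b)$, so the decreasing sets $\{\tp(\bar b)\colon d(b_i\rest_J,b_j\rest_J)\le \delta_0^n \text{ and no lift}\}$ are intersections of a closed set and an open set, and a decreasing chain of nonempty such sets in a compact space can very well have empty intersection.

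This is exactly why the paper's argument takes a more roundabout route: instead of fixing $A_0$ first and then shrinking a single $\delta_0$, it builds an $\omega$-length Cauchy sequence $(b_n)$ of realisations of $\tp(b)$ together with $\delta_n \ge 2\delta_{n+1} > 0$, taking care that each step's witnessing indiscernible $(b_n^i)_i$ is $b_{<n}$-indiscernible and that a positive ``slack'' $2\delta_n$ survives a compactness step (from $(*_n)$ to $(**_n)$). Passing to the Cauchy limit $b_\omega \in \dcl(b_{<\omega})$ and applying supersimplicity to the entire infinite tuple $b_{<\omega}$ produces a finite $b_{<n}$ with $a_\omega^\varepsilon \ind_{b_{<n}} b_\omega$; the contradiction with $(**_n)$ then exploits the retained slack $2\delta_n$. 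The crucial difference is that supersimplicity is invoked for the infinite tuple $b_{<\omega}$, not just for $b$, and the compactness step at each stage substitutes for the extraction you attempt. Your observation that ``no lift'' is type-invariant is correct, but it does not save the argument, because there is no $\lambda$-indexed family of bad witnesses to extract from.
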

\begin{proof}
  In order to prove right to left, it would suffice to show that for
  every virtually finite singleton $a^\varepsilon$ and set $B$ there is
  $B_0 \subseteq B$ finite such that $a^\varepsilon \ind_{B_0} B$.

  Let $b = \bar b$ be an $I$-tuple enumerating $B$.
  Then by assumption there is $\delta = \bar \delta > 0$ such that
  $a^\varepsilon \ind_{b^\delta} b$.
  Let $B_0 = \{b_i\colon \delta_i < \infty\}$.
  Then $B_0$ is finite, and by right downward transitivity:
  $a^\varepsilon \ind_{B_0} B$.

  We now prove left to right: aiming for a contradiction, we assume
  that $T$ is supersimple, and yet there is no $\delta$ as in the
  statement.
  Let $p(x,y) = \tp(a,b)$, $q(y) = \tp(b)$.
  We will construct by induction a sequence of tuples
  $(b_n\colon n < \omega)$ in
  $q$, and a sequence of tuples of distances $(\delta_n\colon n < \omega)$.
  These will satisfy, among other things, that $\delta_n \geq 2\delta_{n+1} > 0$ and
  $d(b_n,b_{n+1}) \leq \delta_n$.

  For convenience, let $\delta_{-1} = \infty$.

  At the $n$th step, assume we already have $b_{<n}$ satisfying
  $q$ and $\delta_{n-1} > 0$.
  By assumption $a^\varepsilon \nind_{b^{\delta_{n-1}}} b$.
  Therefore there is an indiscernible sequence $(b_n^i: i < \omega)$
  such that $d(b_n^i,b_n^j) \leq \delta_{n-1}$ for all $i,j < \omega$ and yet the
  following is inconsistent:
  \begin{gather*}
    \tag{*$_n$} \label{eq:dind1}
    \bigwedge_{i < \omega} p(x^i,b_n^i) \land \bigwedge_{i,j<\omega} d(x^i,x^j) \leq \varepsilon
  \end{gather*}
  By a compactness argument as in the proof of
  \fref{prp:FinChar}, there exists $\delta_n > 0$ such that the
  following weakening of \fref{eq:dind1} is still inconsistent:
  \begin{gather*}
    \tag{**$_n$} \label{eq:dind2}
    \bigwedge_{i < \omega} p(x^i,{b_n^i}^{2\delta_n}) \land
    \bigwedge_{i,j<\omega} d(x^i,x^j) \leq \varepsilon
  \end{gather*}
  We may always assume that $\delta_n \leq \frac{1}{2}\delta_{n-1}$.

  If $n = 0$, the sequence $(b_n^i\colon i < \omega)$ is $b_{<n}$-indiscernible, and we
  skip the following paragraph.
  If $n > 0$, note that all that matters for \fref{eq:dind2} is
  the type of the sequence $(b_n^i\colon i < \omega)$: we may therefore
  replace it with another sequence which has the same type, such that
  in addition $(b_n^i\colon i < \omega)$ is
  $b_{<n}$-indiscernible and satisfy $d(b_n^i,b_{n-1}) \leq \delta_{n-1}$.
  In order to see this, extend this sequence to arbitrary length
  $\lambda+1$ $(b_n^i\colon i \leq \lambda)$.
  Since $b_n^\lambda \equiv b \equiv b_{n-1}$, we may assume (up to replacing
  $(b_n^i\colon i \leq \lambda)$ with an automorphic image) that $b_n^\lambda = b_{n-1}$.
  Applying \fref{fct:ERind} to the sequence
  $(b_n^i\colon i < \lambda)$ over $b_{<n}$, we can find a sequence
  $(c_n^i\colon i < \omega)$ which is $b_{<n}$-indiscernible, and such
  that for all $m < \omega$ there are $i_0<\cdots<i_{m-1} < \lambda$ such that
  $$c_n^0,\ldots,c_n^{m-1} \equiv_{b_{<n}} b_n^{i_0},\ldots,b_n^{i_{m-1}}.$$
  Therefore $(c_n^i\colon i < \omega) \equiv (b_n^i\colon i < \omega)$, and
  $d(c_n^i,b_{n-1}) \leq \delta_{n-1}$, so
  the sequence $(c_n^i\colon i < \omega)$ has the required properties.

  Let $b_n = b_n^0$, so in particular
  $d(b_n,b_{n-1}) \leq \delta_{n-1}$, and continue the construction.

  Once the construction is done, let $\delta_\omega = \inf \delta_n$.
  If $b$ is an $I$-tuple then so are $\delta_n = \delta_{n,\in I}$ and
  $\delta_\omega = \delta_{\omega,\in I}$.
  Since $\delta_n \geq 2\delta_{n+1}$ for all $n$, we must have
  $\delta_{\omega,i} \in \{0,\infty\}$ for all $i \in I$.
  But if $i \in I$ is such that $\delta_{\omega,i} = \infty$, then $\delta_{n,i} = \infty$
  for all $n$, which means that the $i$th coordinate of $b$ and the
  $b_n$ played absolutely no role throughout the construction, and may
  be entirely dropped.
  Therefore, replacing $b$, $b_n$, $\delta_n$, etc., with sub-tuples we
  may assume that $\delta_\omega = 0$.

  The fact that $\delta_n \geq 2\delta_{n+1}$ implies that for every $n \leq m$:
  $d(b_n,b_m) \leq 2\delta_n$.
  Thus the partial type $q(y) \land \bigwedge_n d(b_n,y) \leq 2\delta_n$ is
  consistent, and has a realisation $b_\omega$.
  Since $\inf \delta_n = 0$, $b_\omega$ is the unique realisation of
  this type, so $b_\omega \in \dcl(b_{<\omega})$
  (we say that $b_\omega$ is the limit of the
  Cauchy sequence $(b_n\colon n < \omega)$).
  Since $b_\omega \equiv b$, there is $a_\omega$ such that $\models p(a_\omega,b_\omega)$.
  By supersimplicity there is $n$ such that
  $a_\omega^\varepsilon \ind_{b_{<n}} b_{<\omega}$, whereby $a_\omega^\varepsilon \ind_{b_{<n}} b_\omega$.

  Let us go back to our $b_{<n}$-indiscernible sequence
  $(b_n^i\colon i < \omega)$, and we recall that $b_n = b_n^0$.
  Find $(b_\omega^i\colon i < \omega)$ such that $b_\omega^0 = b_\omega$ and
  $(b_n^ib_\omega^i\colon i < \omega)$ is
  $b_{<n}$-indiscernible.
  Since $a_\omega^\varepsilon \ind_{b_{<n}} b_\omega$, there are $(a_\omega^i\colon i < \omega)$
  realising
  \begin{gather*}
    \bigwedge_{i<\omega} p(a_\omega^i,b_\omega^i) \land \bigwedge_{i,j<\omega} d(a_\omega^i,a_\omega^j) \leq \varepsilon
  \end{gather*}
  But $d(b_n,b_\omega) \leq 2\delta_n \Longrightarrow d(b_n^i,b_\omega^i) \leq 2\delta_n$.
  This shows that:
  \begin{gather*}
    \bigwedge_{i < \omega} p(a_\omega^i,{b_n^i}^{2\delta_n})
    \land \bigwedge_{i,j<\omega} d(a_\omega^i,a_\omega^j) \leq \varepsilon,
  \end{gather*}
  so \fref{eq:dind2} was consistent after all.
\end{proof}

Having given meaning to $a^\varepsilon \ind_c b$, it is natural to define the
corresponding $\SU$-ranks:

\begin{dfn}
  $\SU(a^\varepsilon/b)$ is the minimal rank taking ordinal values or $\infty$
  satisfying:
  \begin{quote}
    $\SU(a^\varepsilon/b) \geq \alpha+1$ if and only if there is $c$ such that $a^\varepsilon
    \nind_b c$ and $\SU(a^\varepsilon/bc) \geq \alpha$.
  \end{quote}
\end{dfn}

\begin{prp}
  \begin{enumerate}
  \item $T$ is supersimple if and only if $\SU(a^\varepsilon/b) < \infty$ for all
    $b$ and virtually finite $a^\varepsilon$.
  \item Assuming that $T$ is supersimple, $a \ind_b c \Longleftrightarrow \SU(a^\varepsilon/bc)
    = \SU(a^\varepsilon/b)$ for all $\varepsilon>0$.
  \end{enumerate}
\end{prp}
\begin{proof}
  \begin{enumerate}
  \item Standard argument, using \fref{prp:TSS}.
  \item If $\SU(a^\varepsilon/bc) = \SU(a^\varepsilon/b)$ for all $\varepsilon>0$, then
    $a^\varepsilon \ind_b c$ for all $\varepsilon>0$, whereby $a \ind_b c$ by the finite
    character (\fref{prp:FinChar}).

    Conversely, assume that $a \ind_b c$.
    Clearly, $\SU(a^\varepsilon/b) \geq \SU(a^\varepsilon/bc)$.
    We prove by induction on $\alpha$ that $\SU(a^\varepsilon/b) \geq \alpha \Longrightarrow
    \SU(a^\varepsilon/bc) \geq \alpha$.
    For $\alpha = 0$ and limit this is clear, so we need to prove for
    $\alpha = \beta+1$.

    Since $\SU(a^\varepsilon/b) \geq \beta+1$, there is $d$ such that
    $\SU(a^\varepsilon/bd) \geq \beta$ and $a^\varepsilon \nind_b d$.
    We may assume that $d \ind_{ab} c$.
    We assumed that $a \ind_b c$, whereby $ad \ind_b c$ and
    $a \ind_{bd} c$.
    Therefore, by the induction hypothesis, $\SU(a^\varepsilon/bcd) \geq \beta$.
    On the other hand, $a^\varepsilon \nind_{bc} d$: otherwise we'd get
    $a^\varepsilon c \ind_b d$ by \fref{prp:eIndTrans}, contradicting prior
    assumptions.
    This shows that $\SU(a^\varepsilon/bc) \geq \beta+1 = \alpha$.
  \end{enumerate}
\end{proof}

\begin{qst}
  It is fairly easy to prove that for all virtually finite $a^\varepsilon$ and
  $b^\delta$, and all tuples $c$:
  \begin{gather*}
    \SU(a^\varepsilon/bc) + \SU(b^\delta/c) \leq \SU(a^\varepsilon b^\delta/c).
  \end{gather*}
  Note, however, that we use $\SU(a^\varepsilon/bc)$ rather than
  $\SU(a^\varepsilon/b^\delta c)$, to which we haven't given a meaning.
  This is a serious problem, since the converse inequality may easily
  be false (for example, if $a = b$ and $\delta = \infty$.):
  \begin{gather*}
    \SU(a^\varepsilon b^\delta/c) \nleq \SU(a^\varepsilon/bc) \oplus \SU(b^\delta/c).
  \end{gather*}
  Is there a way to give meaning to $\SU(a^\varepsilon/b^\delta c)$ such that
  the standard Lascar inequalities (or reasonable variants thereof)
  hold?
\end{qst}

\section{Lovely pairs}
\label{sec:SSPairs}

We assume familiarity at least with the basics of lovely pairs as
exposed in \cite{ppv:pairs}, where for every simple first order theory
$T$ we constructed its theory of lovely pairs $T^P$, and proved that
if $T$ has the weak non-finite-cover-property then $T^P$ is simple and
independence in $T^P$ was characterised.
In \cite{pezz:nfopairs} we generalised the latter result to the case
where $T$ is any thick simple cat.
Namely, for each such $T$ we constructed a cat $T^\fP$ whose
$|T|^+$-saturated models are precisely the lovely pairs of
models of $T$, and proved it is simple with the same characterisation
of independence.
If $T$ is a first order theory then
$T^\fP$ is first order if and only if $T$ has the
weak non-finite-cover-property, in which case $T^\fP$ coincides with
$T^P$.

\begin{conv}
  If $(M,P)$ is a lovely pair of models of $T$ and $a \in M$ then
  $\tp(a)$ denotes the type of $a$ in $M$ (in the sense of $T$) while
  $\tp^\fP(a)$ denotes its type in $(M,P)$ (i.e., in the sense of
  $T^\fP$).
\end{conv}

We are going to use a few results from \cite{pezz:nfopairs} which do
not appear explicitly in \cite{ppv:pairs}.

If $(M,P)$ is a lovely pair and $a \in M$, then $\tp^\fP(a)$
determines the set of all possible types of Morley sequences (both in
the sense of $T$) for $a$ over $P(M)$.
Conversely, any of these types determines $\tp^\fP(a)$.
Also, the property ``the sequence $(a_i\colon i<\omega)$ has the type of a
Morley sequence for $a$ over $P$'' is definable by a partial type in
$x_{<\omega}$, which is denoted by $\mcl(a)$ (the \emph{Morley class} of
$a$).

Finally, let $a,b,c \in M$, and $(a_ib_ic_i\colon i < \omega) \models \mcl(abc)$ be
a sequence in some model of $T$.
Then $a$ is independent from $b$ over $c$ in the sense of $T^\fP$, in
symbols $a \ind[\fP]_c b$, if and only if
$a_{<\omega} \ind_{c_{<\omega}} b_{<\omega}$ (here in the sense of $T$).

\begin{conv}
  $T$ is a simple Hausdorff cat, and in particular thick.
  Therefore $T^\fP$ exists and the properties mentioned above hold.
\end{conv}

Since $T$ is Hausdorff, so is $T^\fP$.
Also, as $T$ is a reduct of $T^\fP$, any definable metric we might
have fixed for $T$ is also a definable metric in the sense of $T^\fP$.
(Since $T^\fP$ is richer, there may be new definable metrics: however,
as all definable metrics are uniformly equivalent, this makes no
difference.)

\begin{lem}
  \label{lem:TPeDiv}
  Let $a$, $b$ and $c$ be tuples in a lovely pair $(M,P)$, and let
  $(a_ib_ic_i\colon i < \omega) \models \mcl(abc)$ in a universal domain for $T$.
  Let $a^\varepsilon$ be a virtual sub-tuple of $a$.
  Then $\tp^\fP(a^\varepsilon/bc)$ divides over $c$ (in the sense of $T^\fP$) if and
  only if $\tp(a_{<\omega}^\varepsilon/b_{<\omega}c_{<\omega})$ divides over $c_{<\omega}$ (in the
  sense of $T$).

  As we said earlier, the notation $a_{<\omega}^\varepsilon$ here means the virtual
  tuple $(a_i^\varepsilon\colon i < \omega)$, and similarly for tuples of other
  lengths.
  As $\varepsilon$ is a tuple of distances of the length of $a$, there
  should be no ambiguity about this.
\end{lem}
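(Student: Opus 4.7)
The plan is to prove both directions by passing between $T^\fP$-configurations in a sufficiently saturated lovely pair $(M^*,P^*)$ and their Morley sequences over $P^*$ in $T$, exploiting the bijection $\tp^\fP(e)\leftrightarrow\mcl(e)$ recalled from \cite{pezz:nfopairs}.

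For the forward direction, take a $c$-indiscernible sequence $(b^k:k<\omega)\subseteq M^*$ witnessing dividing of $\tp^\fP(a^\varepsilon/bc)$, with $b^0=b$ and $\bigcup_k\tp^\fP(a^\varepsilon/b^k c)$ inconsistent. Realise the joint Morley sequence $((b^k_i)_{k<\omega},c_i:i<\omega)$ for the tuple $((b^k)_{k<\omega},c)$ over $P^*$ inside $M^*$, normalised so that $b^k_0=b^k$ and $c_0=c$. Constancy of $\tp^\fP(b^{k_0},\ldots,b^{k_{n-1}},c)$ in the indices translates, via the bijection, to constancy of the $T$-type of $(b^{k_0}_{<\omega},\ldots,b^{k_{n-1}}_{<\omega},c_{<\omega})$, so $(b^k_{<\omega}:k<\omega)$ is $c_{<\omega}$-indiscernible in $T$ with each $b^k_{<\omega}\equiv_{c_{<\omega}}b_{<\omega}$. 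If $\bigcup_k\tp(a^\varepsilon_{<\omega}/b^k_{<\omega}c_{<\omega})$ were consistent in $T$, a witness $a'_{<\omega}\in M^*$ (available by saturation) would give, for each $k$, a tuple $(\tilde a^k_i:i<\omega)$ with $(\tilde a^k_i b^k_i c_i:i<\omega)\equiv(a_i b_i c_i:i<\omega)$ in $T$ and $d(\tilde a^k_i,a'_i)\le\varepsilon$ for all $i$. Since the row $(\tilde a^k_i b^k_i c_i:i<\omega)$ realises $\mcl(abc)$, the bijection delivers $\tilde a^k_0\equiv^\fP_{b^k c}a$, whence $a'_0$ witnesses consistency of $\bigcup_k\tp^\fP(a^\varepsilon/b^k c)$ in $T^\fP$, a contradiction.

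For the reverse direction, begin with a $c_{<\omega}$-indiscernible $T$-witness $(B^k:k<\omega)$ where $B^k\equiv_{c_{<\omega}}b_{<\omega}$ and $\bigcup_k\tp(a^\varepsilon_{<\omega}/B^k c_{<\omega})$ is inconsistent. Use the $\tp^\fP\leftrightarrow\mcl$ bijection together with $T^\fP$-saturation of $(M^*,P^*)$ to produce $(b^k:k<\omega)\subseteq M^*$ that is $c$-indiscernible in $T^\fP$ with $b^0=b$, $b^k\equiv^\fP_c b$, and whose joint Morley sequence over $P^*$ has the same joint $T$-type over $c_{<\omega}$ as $(B^k:k<\omega)$. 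Dually to the forward argument, if some $a'\in M^*$ witnessed consistency of $\bigcup_k\tp^\fP(a^\varepsilon/b^k c)$ in $T^\fP$, pick $\tilde a^k\in M^*$ with $\tilde a^k\equiv^\fP_{b^k c}a$ and $d(a',\tilde a^k)\le\varepsilon$, and form the joint Morley sequence $(a'_i,(\tilde a^k_i)_k,(b^k_i)_k,c_i:i<\omega)$ over $P^*$. $L$-indiscernibility gives $d(a'_i,\tilde a^k_i)\le\varepsilon$ for all $i,k$, and each row $(\tilde a^k_i b^k_i c_i:i<\omega)$ realising $\mcl(abc)$ yields $(\tilde a^k_i b^k_i c_i:i<\omega)\equiv(a_i b_i c_i:i<\omega)$ in $T$; thus $a'_{<\omega}$ witnesses consistency of the $T$-version, contradicting the choice of $(B^k:k<\omega)$.

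The main obstacle is the reverse-direction lifting step: matching the joint $T$-type of $(B^k:k<\omega,c_{<\omega})$ to the Morley class of some $T^\fP$-configuration in $(M^*,P^*)$. This requires verifying that the joint $T$-type is realisable as a Morley-class type (essentially an indiscernibility-extraction argument in $T$ on the $i$-index, turning the given rows into an honest joint Morley sequence over $P^*$) and then using the bijection plus $T^\fP$-saturation to realise the corresponding $T^\fP$-type by a sequence $(b^k:k<\omega)$ in $M^*$.
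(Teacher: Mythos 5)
The gap is in your forward direction, at the step where you conclude $\tilde a^k_0 \equiv^\fP_{b^kc} a$ from the fact that the sequence $(\tilde a^k_i b^k_i c_i : i < \omega)$ realises the partial type $\mcl(abc)$. The bijection $\tp^\fP \leftrightarrow \mcl$ lets you read off $\tp^\fP(e)$ from an \emph{actual} Morley sequence for $e$ over $P^*$ inside the pair; it does not say that every tuple in the $T$-monster realising the partial type $\mcl$ \emph{is} such a Morley sequence. Your $\tilde a^k_{<\omega}$ is merely a witness to a $T$-partial type extracted by compactness; nothing forces $(\tilde a^k_i : i < \omega)$ to be $P^*$-indiscernible, to satisfy the required independence over $P^*$, or even to force $\tilde a^k_0$ to sit in a lovely pair at all. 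So you cannot read off $\tp^\fP(\tilde a^k_0/b^kc)$, and the alleged contradiction evaporates. This is precisely the difficulty that the paper's proof spends its bulk resolving: after obtaining from non-dividing in $T$ a tuple $a'_{<\omega}$ close to $a_{<\omega}$ with $a'_{<\omega} \ind_{c_{<\omega}} b_{<\omega}$, the paper does \emph{not} treat the resulting sequence as already being a Morley class. It introduces the canonical bases $e = \Cb(b_\omega c_\omega / b_{<\omega} c_{<\omega})$ and $f = \Cb(c_\omega/c_{<\omega})$, uses definability of independence over a known complete type, extracts via \fref{fct:ERind} an indiscernible $(\tilde a_i a_i b_i c_i : i<\omega)$ with $d(a_i,\tilde a_i)\leq\varepsilon$, recovers $\tilde a_{<\omega} \ind_{c_{<\omega}} b_{<\omega}$ by finite character, and only then explicitly \emph{constructs a fresh lovely pair} in which the lengthened sequence becomes an honest Morley class. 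None of these steps appear in your sketch, and they are not optional.

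Your difficulty assessment is also inverted. The direction you flag as problematic, namely dividing in $T$ implying dividing in $T^\fP$, is in fact the easy one: its contrapositive follows in a few lines from the characterisation $a \ind[\fP]_c b \Leftrightarrow a_{<\omega} \ind_{c_{<\omega}} b_{<\omega}$. A witness $a'$ with $d(a,a')\leq\varepsilon$ and $a'\ind[\fP]_c b$ is pushed through a single joint Morley sequence $(a'_ia_ib_ic_i : i<\omega)\models\mcl(a'abc)$, giving $a'_{<\omega}\ind_{c_{<\omega}}b_{<\omega}$ and $d(a_i,a'_i)\leq\varepsilon$ immediately. There is no need to ``lift'' a $T$-witnessing indiscernible sequence $(B^k : k<\omega)$ to a $T^\fP$-configuration, and the realisability question you raise for that step is just as problematic as the one in your forward direction, for the same reason.
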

\begin{proof}
  Assume that $\tp^\fP(a^\varepsilon/bc)$ does not divide over $c$.
  Then there is $a' \in M$ such that $d(a,a') \leq \varepsilon$ and
  $a' \ind[\fP]_c b$.
  There exist $(a_i'\colon i < \omega)$ such that
  $(a_i'a_ib_ic_i\colon i < \omega) \models \mcl(a'abc)$.
  Then $a' \ind[\fP]_c b \Longrightarrow a'_{<\omega} \ind_{c_{<\omega}} b_{<\omega}$, and
  $d(a,a') \leq \varepsilon \Longrightarrow d(a_i,a_i') \leq \varepsilon$ for all $i < \omega$, whereby
  $\tp(a_{<\omega}^\varepsilon/b_{<\omega}c_{<\omega})$ does not divide over $c_{<\omega}$.

  Conversely, assume that $\tp(a_{<\omega}^\varepsilon/b_{<\omega}c_{<\omega})$ does not
  divide over $c_{<\omega}$.
  Then there exist $(a_i'\colon i < \omega)$ such that $d(a_i,a_i') < \varepsilon$ for
  all $i < \omega$ and $a'_{<\omega} \ind_{c_{<\omega}} b_{<\omega}$, but the sequence
  $(a'_ia_ib_ic_i\colon i < \omega)$ needs not be indiscernible.

  Continue the sequence $(a_ib_ic_i)$ to an indiscernible sequence of
  length $\lambda > \omega$, big enough to allow us to apply \fref{fct:ERind}
  later on.
  Let $e = \Cb(b_\omega c_\omega/b_{<\omega}c_{<\omega})$
  and $f = \Cb(c_\omega/c_{<\omega})$ (in the sense of $T$), so
  $(b_ic_i\colon i < \lambda)$ is a Morley sequence over $e$,
  and $(c_i\colon i < \lambda)$ is a Morley sequence over $f$, and indiscernible
  over $ef$, whereby $c_{<\lambda} \ind_f e$.
  It follows that for all $w \subseteq \omega$:
  \begin{align*}
    b_{\in w}c_{\in w} \ind_e c_{\in\omega\setminus w}
    & \Longrightarrow b_{\in w} \ind_{c_{\in w}e} c_{<\omega}
    \Longrightarrow b_{\in w}e \ind_{c_{\in w}f} c_{<\omega}
  \end{align*}
  Whereby:
  \begin{align*}
    b_{<\omega} \ind_{c_{<\omega}} a'_{<\omega} & \Longrightarrow b_{\in w} \ind_{c_{<\omega}} a'_{<\omega}
    \Longrightarrow b_{\in w} \ind_{fc_{\in w}} a'_{\in w}.
  \end{align*}

  For any finite $w \subseteq \lambda$, $\tp(b_{\in w}c_{\in w}f)$ depends solely on
  $|w|$ (where the tuples $b_{\in w}$ and $c_{\in w}$ are enumerated
  according to the ordering induced on $w$ from $\lambda$).
  Thus by the
  definability of independence for known complete types, for every
  $n < \omega$ and tuple of variables $x$ there is a partial type
  $\rho_{n,x}(x,y_{<n},z_{<n},f)$ such that for every $w \in [\lambda]^n$ and
  every $g$ in the sort of $x$:
  \begin{gather*}
    b_{\in w} \ind_{c_{\in w}f} g \Longleftrightarrow \, \models \rho_{n,x}(g,b_{\in w},c_{\in w},f).
  \end{gather*}

  Putting these two facts together and applying compactness we can
  find a sequence $(a''_i\colon i < \lambda)$
  such that $d(a_i,a''_i) \leq \varepsilon$ for all $i < \lambda$, and
  $b_{\in w} \ind_{c_{\in w}f} a''_{\in w}$ for every finite $w \subseteq \lambda$.

  As we could have chosen $\lambda$ arbitrarily big, by standard extraction
  arguments (i.e., \fref{fct:ERind}) there exists a
  sequence $(\tilde a_i\colon i < \omega)$ such that
  $(\tilde a_ia_ib_ic_i\colon i < \omega)$   is $f$-indiscernible, and in addition
  $d(a_i,\tilde a_i) \leq \varepsilon$ for all
  $i < \omega$, and $b_{\in w} \ind_{c_{\in w}f} \tilde a_{\in w}$ for all
  finite $w \subseteq \omega$.
  As every formula in $\tp(\tilde a_{<\omega}/b_{<\omega}c_{<\omega}f)$ only involves
  finitely many variables and parameters, it does not divide over
  $c_{\in w}f$ for some finite $w \subseteq \omega$, and \emph{a fortiori} over
  $c_{<\omega}f$.
  Therefore $\tilde a_{<\omega} \ind_{c_{<\omega}f} b_{<\omega}$.
  As $f \in \dcl(c_{<\omega})$, we conclude that
  $\tilde a_{<\omega} \ind_{c_{<\omega}} b_{<\omega}$.

  As the sequence $(\tilde a_ia_ib_ic_i\colon i < \omega)$ is indiscernible, we
  may extend it to length $\omega+1$.
  Then one can find a lovely pair $(M,P)$ such that
  $\tilde a_{\leq\omega}a_{\leq\omega}b_{\leq\omega}c_{\leq\omega} \in M$,
  $\tilde a_{<\omega}a_{<\omega}b_{<\omega}c_{<\omega} \in P$, and
  $\tilde a_\omega a_\omega b_\omega c_\omega \ind_{\tilde a_{<\omega}a_{<\omega}b_{<\omega}c_{<\omega}} P$.
  It follows that
  $(\tilde a_ia_ib_ic_i\colon i < \omega) \models \mcl^{(M,P)}(\tilde a_\omega a_\omega b_\omega c_\omega)$.
  Thus $d(a_\omega,\tilde a_\omega) \leq \varepsilon$ and $\tilde a_\omega \ind[\fP]_{c_\omega} b_\omega$.
  In particular, $\tp^\fP({a_\omega}^\varepsilon/b_\omega c_\omega)$ does not divide over $c_\omega$.

  As we assumed that $(a_ib_ic_i\colon i < \omega) \models \mcl(abc)$, we have
  $a_\omega b_\omega c_\omega \equiv^\fP abc$, so $\tp^\fP(a^\varepsilon/bc)$ does not divide over $c$.
\end{proof}

\begin{thm}
  \label{thm:TPSS}
  If $T$ is supersimple, then so is $T^\fP$.
\end{thm}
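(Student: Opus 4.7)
By \fref{thm:SSChar} applied inside $T^\fP$ (which is simple because $T$ is), it suffices to show that for every virtually finite singleton $a^\varepsilon$ and every tuple $b$ in a lovely pair $(M,P)$, there is a virtually finite $\delta > 0$ of the length of $b$ with $a^\varepsilon \ind[\fP]_{b^\delta} b$ in $T^\fP$. The strategy, foreshadowed in the introduction, is to apply supersimplicity of $T$ in its SSChar form \emph{twice}, then transfer back to $T^\fP$ via an unravelling by $T$-Morley sequences over $P$ in the style of \fref{lem:TPeDiv}.

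Fix a $T$-Morley sequence $(a_ib_i : i<\omega) \models \mcl(a,b)$ in a $T$-universal domain. The first application of \fref{thm:SSChar} in $T$, to the virtually finite singleton $a^\varepsilon$ and the tuple $b_{<\omega}$, yields a virtually finite $\delta_1 > 0$ with $a^\varepsilon \ind_{b_{<\omega}^{\delta_1}} b_{<\omega}$ in $T$. Since $\delta_1$ is virtually finite, the virtual tuple $b_{<\omega}^{\delta_1}$ is itself virtually finite, which is precisely what permits a \emph{second} application of \fref{thm:SSChar}: applied to the virtually finite $b_{<\omega}^{\delta_1}$ and the original tuple $b$, it produces a virtually finite $\delta > 0$ of the length of $b$ with $b_{<\omega}^{\delta_1} \ind_{b^\delta} b$ in $T$. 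In the old non-metric formulation, where one is effectively forced to take $\delta_1 = 0$, the object $b_{<\omega}^0 = b_{<\omega}$ would be a real infinite tuple, making the second application unavailable; this is the gap \fref{thm:SSChar} is designed to fill.

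It remains to convert these two $T$-level independences into $a^\varepsilon \ind[\fP]_{b^\delta} b$ in $T^\fP$. Given a $T^\fP$-indiscernible sequence $(c^k)_{k<\omega}$ that could be in $\tp^\fP(b/b^\delta)$ — so $c^k \equiv^\fP b$ and $d(c^k,c^l) \leq \delta$ — we expand each $c^k$ to a $T$-Morley sequence $(c^k_j)_{j<\omega} \models \mcl(c^k) = \mcl(b)$ over $P$, extend to some length $\lambda \gg \omega$, and apply \fref{fct:ERind} to pass to a suitably $T$-indiscernible array, exactly as in the proof of \fref{lem:TPeDiv}. The bound $d(c^k,c^l) \leq \delta$, combined with $b_{<\omega}^{\delta_1} \ind_{b^\delta} b$, propagates to virtual closeness of the tuples $(c^k_{<\omega})^{\delta_1}$; then $a^\varepsilon \ind_{b_{<\omega}^{\delta_1}} b_{<\omega}$ yields matching $T$-Morley realisations $(e^k_j)_{j<\omega} \models \mcl(a,c^k)$ whose $a$-coordinates are pairwise at distance $\leq \varepsilon$. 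Collecting into a lovely pair extending $(M,P)$, the elements $e^k := e^k_0$ furnish the required $T^\fP$-indiscernible sequence $(e^k c^k)_{k<\omega}$ in $\tp^\fP(a,b)$ with $d(e^k,e^l) \leq \varepsilon$, establishing the independence claim.

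The hardest step is this final assembly: with an analogue of \fref{lem:TPeDiv} for virtual parameters it would be immediate, but no such analogue is available, so the work must be done through indiscernible sequences directly, relying on the partial transitivity and monotonicity of \fref{prp:eIndTrans} to compensate for the absence of symmetry and full transitivity over virtual tuples.
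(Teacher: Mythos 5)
Your proposal takes a genuinely different route from the paper, but it has gaps that I do not see how to close.

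The paper never tries to establish a \emph{virtual} independence $a^\varepsilon \ind[\fP]_{b^\delta} b$ in $T^\fP$: it proves the definition of supersimplicity directly (non-dividing over a finite $B' \subseteq B$), working entirely at the level of $T$-Morley sequences. It applies \fref{thm:SSChar} first to $a_\omega^\varepsilon$ over the virtual tuple $(a_{<\omega})^\upsilon(B_{<2\omega})^\rho$ --- crucially obtaining a virtual distance on the \emph{$a$-side} of the Morley sequence --- and then a second time to $a_{<n}^\delta$ (the virtually finite piece of the $a$-Morley sequence produced by the first application) over $B_{<\omega}$. An induction with left-upward transitivity and monotonicity from \fref{prp:eIndTrans} upgrades this to $a_{<n}^\delta a_{<\omega}^\varepsilon \ind_{B'_{<\omega}} B_{<\omega}$, and only at the very end is \fref{lem:TPeDiv} invoked, with the base $B'_{<\omega}$ being an \emph{ordinary} (non-virtual) tuple, which is exactly what that lemma handles.

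Your two applications of \fref{thm:SSChar} are of a different and, I think, unworkable form. Your first produces $a^\varepsilon \ind_{b_{<\omega}^{\delta_1}} b_{<\omega}$ where $a$ is a \emph{singleton}, not a Morley sequence of $a$'s; your second produces $b_{<\omega}^{\delta_1} \ind_{b^\delta} b$, a statement purely about the $b$-Morley sequence. Neither gives any virtual control over the $a$-side Morley data, which is what the paper's $a_{<n}^\delta$ supplies and what drives the transitivity induction. More seriously, in your transfer step: applying your (A) to a $\delta_1$-close family of sequences $(\eta^k_{<\omega})$ yields, for each $k$, a \emph{single} element $\alpha^k$ with $(\eta^k_{<\omega},\alpha^k)$ indiscernible in $\tp(b_{<\omega}, a)$. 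This is one singleton per $k$, not a Morley sequence $(e^k_j)_{j<\omega} \models \mcl(a, c^k)$ as you assert. The $T$-type $\tp(a, b_{<\omega})$ (i.e., one $a$-coordinate together with the $b$-Morley sequence) does not determine the joint Morley class $\mcl(a,b)$ of the pair, and hence does not determine $\tp^\fP(a,b)$; for that you need $a_{<\omega}$, as in \fref{lem:TPeDiv}. Without a Morley sequence of $a$-coordinates, the final assembly into a lovely pair making $(\alpha^k, c^k)$ $T^\fP$-indiscernible in $\tp^\fP(a,b)$ is not justified --- it would essentially require a virtual analogue of \fref{lem:TPeDiv}, which you correctly note does not exist, but then you proceed as if it did. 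As things stand, the third step is a genuine gap rather than a mere expository compression.
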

\begin{proof}
  We need to show that for every virtually finite element $a^\varepsilon$
  and every tuple $B = b_{\in I}$ in a model of $T^\fP$,
  there exists a finite sub-tuple $B' \subseteq B$ such that
  $\tp^\fP(a^\varepsilon/B)$ does not divide over $B'$.

  We follow the path of \cite[Corollary~3.6]{pezz:nfopairs}.
  Choose $(a_jB_j\colon j<2\omega) \models \mcl(aB)$ in some model of $T$.
  Let $b_{\in I,j}$ be the enumeration of each $B_j$ corresponding to
  $B = b_{\in I}$.

  By supersimplicity of $T$ there are tuples of distances
  $\upsilon = \upsilon_{<\omega} > 0$ and $\rho = \rho_{\in I,<2\omega} > 0$ such that:
  \begin{gather}
    \label{eq:TPSS1}
    a_\omega^\varepsilon \ind_{(a_{<\omega})^\upsilon(B_{<2\omega})^\rho} a_{<\omega}B_{<2\omega}.
  \end{gather}
  Then by definition, there are only finitely many $j < \omega$ such that
  $\upsilon_j \neq \infty$, and only finitely many pairs $(i,j) \in I \times 2\omega$ such
  that $\rho_{i,j} \neq \infty$, so we can define:
  \begin{align*}
    n & = 1+\max\{j<\omega\colon \upsilon_j \neq \infty
    \text{ or there exists $i \in I$ such that } \rho_{i,j} \neq \infty\},\\
    \delta & = \min\{\varepsilon, \upsilon_j\colon j < n\},\\
    J_0 & = \{i \in I\colon \rho_{i,j} \neq \infty \text{ for some } j < 2\omega\}.
  \end{align*}
  In particular, $\varepsilon \geq \delta > 0$ and $J_0 \subseteq I$ is finite.

  By right downward transitivity, \fref{eq:TPSS1} becomes:
  \begin{gather*}
    a_\omega^\varepsilon \ind_{a_{<n}^\delta, b_{\in J_0,\in[0,n)\cup[\omega,2\omega)}} a_{<\omega}B_{<2\omega}.
  \end{gather*}
  Applying supersimplicity again, there is $J_1 \subseteq I$ finite
  such that:
  \begin{gather*}
    a_{<n}^\delta \ind_{b_{\in J_1,<\omega}} B_{<\omega}.
  \end{gather*}
  Let $J = J_0 \cup J_1 \subseteq I$, and let $B' = b_{\in J}$, $B'_j = b_{\in J,j}$.
  These are finite sub-tuples of $B$ and $B_j$, respectively, and:
  \begin{gather}
    \label{eq:TPSS2}
    a_\omega^\varepsilon \ind_{a_{<n}^\delta, B'_{\in[0,n)\cup[\omega,2\omega)}} a_{<\omega}B_{<2\omega},\\
    \label{eq:TPSS3}
    a_{<n}^\delta \ind_{B'_{<\omega}} B_{<\omega}.
  \end{gather}

  We now prove by induction on $n \leq m < \omega$ that:
  \begin{gather}
    \label{eq:TPSS4}
    a_{<n}^\delta a_{<m}^\varepsilon \ind_{B'_{<\omega}} B_{<\omega}.
  \end{gather}
  For $m = n$, this follows from \fref{eq:TPSS3} since
  $\varepsilon \geq \delta$.
  Assume now \fref{eq:TPSS3} for some $n \leq m < \omega$.
  From \fref{eq:TPSS2} we obtain by monotonicity and right
  downward transitivity:
  \begin{gather}
    \label{eq:TPSS5}
    a_\omega^\varepsilon \ind_{a_{<n}^\delta a_{<m}^\varepsilon, B'_{\in[0,m)\cup[\omega,2\omega)}} B_{\in[0,m)\cup[\omega,2\omega)}.
  \end{gather}
  Since $(a_j,b_{\in I,j}\colon j<2\omega)$ is an indiscernible sequence, there is
  an automorphism sending $a_{\omega+j}B_{\omega+j}$ to $a_{m+j}B_{m+j}$
  for every $j < \omega$, while keeping $a_{<m}B_{<m}$ in place.
  Applying such an automorphism to \fref{eq:TPSS5} we get:
  \begin{gather*}
    a_m^\varepsilon \ind_{a_{<n}^\delta a_{<m}^\varepsilon, B'_{<\omega}} B_{<\omega}.
  \end{gather*}
  By left upward transitivity and the induction assumption
  \fref{eq:TPSS4} we obtain:
  \begin{gather*}
    a_{<n}^\delta a_{<m+1}^\varepsilon \ind_{B'_{<\omega}} B_{<\omega}.
  \end{gather*}

  This concludes the proof of \fref{eq:TPSS4}.
  By the finite character of independence
  (\fref{prp:FinChar}) we conclude that:
  \begin{gather*}
    a_{<n}^\delta a_{<\omega}^\varepsilon \ind_{B'_{<\omega}} B_{<\omega}.
  \end{gather*}
  In particular, $\tp(a_{<\omega}^\varepsilon/B_{<\omega})$ does not divide over
  $B'_{<\omega}$.
  By \fref{lem:TPeDiv} $\tp^\fP(a^\varepsilon/B)$ does not divide over $B'$,
  which is finite, as required.
\end{proof}

\begin{cor}
  If $T$ is superstable, then so is $T^\fP$.
\end{cor}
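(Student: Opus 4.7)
The plan is very short, since nearly all of the work has already been done. Recall that in this paper superstable means stable plus supersimple, so to prove the corollary I must establish two things about $T^\fP$: (i) supersimplicity, and (ii) stability.

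The first part, supersimplicity of $T^\fP$, is immediate from \fref{thm:TPSS}: since $T$ is superstable, $T$ is in particular supersimple, and then $T^\fP$ is supersimple by that theorem.

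For the second part, stability of $T^\fP$, I would invoke the known transfer of stability from $T$ to its theory of lovely pairs. This is the classical result of Poizat in the first order stable case and is generalised to thick simple cats in \cite{pezz:nfopairs}: if $T$ is stable then so is $T^\fP$, since independence in $T^\fP$ is characterised in terms of independence in $T$ applied to a Morley sequence and the forking calculus in $T^\fP$ is therefore symmetric whenever forking in $T$ is. (Alternatively, one can compute the number of $\fP$-types over a model of $T^\fP$ from the number of $T$-types, using that any $a \in M$ is determined up to $\tp^\fP$ by the type of a Morley sequence for $a$ over $P(M)$ in the sense of $T$.) Thus $T^\fP$ is stable. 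Combining (i) and (ii) gives that $T^\fP$ is superstable.

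The only possible obstacle is making sure the definition of superstability used here (``stable plus supersimple'', as stated explicitly in the introduction) is the one we invoke, rather than some metric variant of $\lambda$-stability; but as the introduction also points out, the metric stability spectrum theorem from \cite{pezz:Morley} shows that these notions agree, so either formulation of superstability for $T^\fP$ follows from (i) and (ii).
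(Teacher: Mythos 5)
Your proof is correct and follows essentially the same route as the paper: decompose superstability into stability plus supersimplicity, invoke \fref{thm:TPSS} for the latter, and cite the stability transfer from \cite{pezz:nfopairs} (the paper pins this down as Theorem~3.10 there) for the former. The extra discussion you add about why stability transfers and about the equivalence of the two formulations of superstability is sound but not needed once the citation is in place.
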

\begin{proof}
  $T$ is superstable if and only if it is stable and supersimple.
  In that case $T^\fP$ is stable by \cite[Theorem~3.10]{pezz:nfopairs}
  and supersimple by \fref{thm:TPSS}.
  Therefore $T^\fP$ is superstable.
\end{proof}

\begin{qst}
  Assume that $T$ is $\omega$-stable.
  Is $T^\fP$ $\omega$-stable as well?
\end{qst}

\providecommand{\bysame}{\leavevmode\hbox to3em{\hrulefill}\thinspace}
\providecommand{\MR}{\relax\ifhmode\unskip\space\fi MR }
\providecommand{\MRhref}[2]{%
  \href{http://www.ams.org/mathscinet-getitem?mr=#1}{#2}
}
\providecommand{\href}[2]{#2}


\begin{thebibliography}{{Ben}05b}

\bibitem[{Ben}]{pezz:schroed}
Itay {Ben-Yaacov}, \emph{{S}chr{\"o}dinger's cat}, Israel Journal of
  Mathematics, to appear.

\bibitem[{Ben}03a]{pezz:ultind}
\bysame, \emph{Discouraging results for ultraimaginary independence theory},
  Journal of Symbolic Logic \textbf{68} (2003), no.~3, 846--850.

\bibitem[{Ben}03b]{pezz:posmod}
\bysame, \emph{Positive model theory and compact abstract theories}, Journal of
  Mathematical Logic \textbf{3} (2003), no.~1, 85--118.

\bibitem[{Ben}03c]{pezz:catsim}
\bysame, \emph{Simplicity in compact abstract theories}, Journal of
  Mathematical Logic \textbf{3} (2003), no.~2, 163--191.

\bibitem[{Ben}03d]{pezz:fnctcat}
\bysame, \emph{Thickness, and a categoric view of type-space functors},
  Fundamenta Mathematicae \textbf{179} (2003), 199--224.

\bibitem[{Ben}04]{pezz:nfopairs}
\bysame, \emph{Lovely pairs of models: the non first order case}, Journal of
  Symbolic Logic \textbf{69} (2004), no.~3, 641--662.

\bibitem[{Ben}05a]{pezz:bslcat}
\bysame, \emph{Compactness and independence in non first order frameworks},
  Bulletin of Symbolic Logic \textbf{11} (2005), no.~1, 28--50.

\bibitem[{Ben}05b]{pezz:Morley}
\bysame, \emph{Uncountable dense categoricity in cats}, Journal of Symbolic
  Logic \textbf{70} (2005), no.~3, 829--860.

\bibitem[BL03]{bl:hom}
Steven Buechler and Olivier Lessmann, \emph{{Simple homogeneous models}},
  Journal of the American Mathematical Society \textbf{16} (2003), 91--121.

\bibitem[BPV03]{ppv:pairs}
Itay {Ben-Yaacov}, Anand Pillay, and Evgueni Vassiliev, \emph{Lovely pairs of
  models}, Annals of Pure and Aplied Logic \textbf{122} (2003), 235--261.

\bibitem[Bue91]{bue:PsProj}
Steven Buechler, \emph{Pseudoprojective strongly minimal sets are locally
  projective}, Journal of Symbolic Logic \textbf{56} (1991), no.~4, 1184--1194.

\bibitem[Iov99]{iov:stab}
Jos{\'e} Iovino, \emph{{Stable Banach spaces and Banach space structures, I and
  II}}, Models, Algebras, and Proofs (Bogot\'a, 1995), Lectures Notes in Pure
  and Appl. Math., no. 203, Dekker, New York, 1999, pp.~77--117.

\bibitem[Poi83]{poiz:paires}
Bruno Poizat, \emph{Paires de structures stables}, {Journal of Symbolic Logic}
  \textbf{48} (1983), no.~2, 239--249.

\bibitem[She75]{sh:lazy}
Saharon Shelah, \emph{The lazy model-theoretician's guide to stability},
  Logique et Analyse \textbf{71--72} (1975), 241--308.

\bibitem[Vas02]{vas:su1p}
Evgueni Vassiliev, \emph{{Generic pairs of SU-rank 1 structures}}, Annals of
  Pure and Applied Logic \textbf{120} (2002), 103--149.

\end{thebibliography}
\end{document}